\tikzstyle{decision} = [diamond, draw, fill=blue!20, 
\tikzstyle{block} = [rectangle, draw, fill=blue!20, 
\tikzstyle{line} = [draw, -latex']
\tikzstyle{cloud} = [draw, ellipse,fill=red!20, node distance=3cm,
\tikzset{main node/.style={circle,fill=blue!20,draw,minimum size=1cm,inner sep=0pt},  }
\newcommand{\wc}[1]{{\color{red} [Li: #1]}}
\title{Controlling Propagation of epidemics via mean-field controls}
\author[Lee]{Wonjun Lee}
\author[Liu]{Siting Liu} 
\author[Tembine]{Hamidou Tembine} 
\author[Li]{Wuchen Li} 
\author[Osher]{Stanley Osher}
\thanks{This work is supported by AFOSR MURI FA9550-18-1-0502.}
\begin{document}
\maketitle
\begin{abstract} { The coronavirus disease 2019 (COVID-19) pandemic is changing and impacting lives on a global scale. In this paper, we introduce a mean-field control model in controlling the propagation of epidemics on a spatial domain. The control variable, the spatial velocity, is first introduced for the classical disease models, such as the SIR model. For this proposed model, we provide fast numerical algorithms based on proximal primal-dual methods. Numerical experiments demonstrate that the proposed model illustrates how to separate infected patients in a spatial domain effectively.} 
\end{abstract}

\section{Introduction}
\label{sec:intro}
The outbreak of COVID-19 epidemic has resulted in over millions of confirmed cases and hundred thousands of deaths globally. It has a huge impact on global economy as well as everyone's daily life. There has been a lot of interest in modeling the dynamics and propagation of the epidemic. {One of the well-known and basic models in epidemiology} is the SIR model proposed by Kermack and McKendrick \cite{kermack1927contribution} in 1927. { Here, S, I, R represent the number of susceptible, infected and recovered people respectively.} They use an ODE system to describe the transmission dynamics of infectious diseases among the population. As the propagation of COVID-19 has significant spatial characteristic, actions such as travel restrictions, { physical distancing} and self-quarantine are taken to slow down the spread of the epidemic. It is important to have a spatial-type SIR model to study the spread of the infectious disease and movement of individuals \cite{kendall1965mathematical,kallen1984thresholds,hosono1995traveling}. 

Since the epidemic has affected the society and individuals significantly, mean-field games and mean field controls (MFG, MFC) provide a perspective to study and understand the underlying population dynamics. Mean-field games were introduced by Jovanovic and Rosenthal \cite{Jovanovic88},  Huang, Malham\'{e}, and Caines \cite{HCM06}, and Lasry and Lions \cite{LasryLions06a,LasryLions06b}. They model a huge population of agents playing dynamic games. 
There is growing research interest in this direction. For a review of MFG theory, we refer to \cite{lasry2007mean,gomes2014mean}. With wide application to various fields \cite{Gomes:2015th,burger2013mean,lachapelle2016efficiency,achdou19}, computational methods are also designed to solve related high dimensional MFG problems \cite{bencar'15, silva18, EHanLi2018_meanfield,lin2020apacnet, ruthotto2019machine, liu2020computational}.

In this paper, we combine the above ideas of spatial SIR model and MFG. In other words, we introduce a mean-field game (control) model for controlling the virus spreading within a spatial domain. Here the goal is to minimize the number of infectious agents and the amount of movement of the population. In short, we formalize the following constrained optimization problem 
 \begin{equation*}
    \begin{split}
        &{ \inf_{(\rho_i, v_i)_{i\in \{S,I,R\}}}} \indent E(\rho_I(T,\cdot)) + \int^T_0  \int_\Omega \sum_{i\in \{S,I,R\}} \frac{\alpha_i}{2}\rho_i |v_i|^2 + \frac{c}{2}(\rho_S + \rho_I + \rho_R)^2 dx dt
    \end{split}
\end{equation*}
subject to 
\begin{equation*}
  \left\{ \begin{aligned}
       & \partial_t \rho_S + \nabla \cdot (\rho_S v_S) + \beta \rho_S \rho_I - \frac{\eta^2_S}{2} \Delta \rho_S= 0\\
        &\partial_t \rho_I + \nabla \cdot (\rho_I v_I) - \beta \rho_S \rho_I + \gamma \rho_I - \frac{\eta^2_I}{2} \Delta \rho_I = 0\\
        &\partial_t \rho_R + \nabla \cdot (\rho_R v_R) - \gamma \rho_I - \frac{\eta^2_R}{2} \Delta \rho_R= 0\\
&\rho_S(0,\cdot), \rho_I(0,\cdot), \rho_R(0,\cdot) \text{ are given.}
    \end{aligned}\right.
\end{equation*}
{Here $\rho_i$ represents population density  and $v_i$ describes the movement, with $i\in \{S,I,R\}$  corresponding to the susceptible, infected and recovered compartmental state or class.} We consider the spatial SIR model with nonlocal spreading modeled by an integration kernel $K$ representing the physical distancing and a spatial diffusion of population, and set it as dynamic to our mean-field control problem, which is the constraint to the minimization problem.
The minimization objective include both the movement and the congestion of the population. The kinetic energy terms describes the situation that, if population (the susceptible, infected or recovered) needs to be moved to alleviate local medical shortage, there is a cost behind it. The congestion term models the fact that government don't want the population gets too concentrated in one place. This might increase the risk of disease outbreaks and their faster and wider spread.
{Due to the multiplicative nature of the interaction term between susceptible and infectious agents $\beta \rho_S \rho_I$, the mean-field control problem is  a non-convex problem.} With Lagrange multipliers, we formalize the mean-field control problem as an unconstrained optimization problem. Fast numerical algorithms are designed to solve the non-convex optimization problem in $2D$ with $G-prox$ preconditioning \cite{JacobsLegerLiOsher2018_solvinga}.

In the literature, spatial SIR models in the form of a nonlinear integro-differential \cite{aronson1977asymptotic,diekmann1979run,thieme1977model} and reaction-diffusion system \cite{kallen1984thresholds,hosono1995traveling} have been studied. Traveling waves are studied to understand the propagation of various type of epidemics, such as Lyme disease, measles etc, and recently, COVID-19 \cite{caraco2002stage, grenfell2001travelling, wang2010travelling,berestycki2020propagation}.
In \cite{berestycki2020propagation}, they introduce a SIRT model to study the effects of the presence of a road on the spatial propagation of the epidemic.
For surveys, see \cite{murray2001mathematical, ruan2007spatial}.
As for numerical modelling of epidemic model concerning spatial effect, finite-difference methods are used to discretize the reaction-diffusion system and solve the spatial SIR model and its various extensions \cite{chinviriyasit2010numerical, jaichuang2014numerical, farago2016qualitatively}.
Epidemic models have been treated using optimal control theory, with major control measures on medicare (vaccination) \cite{sethi1978optimal, lahrouz2018dynamics,jang2020optimal}.
In \cite{jang2020optimal}, a feedback control problem of SIR model is studied to help determine the vaccine policy, with the goal to minimize the number of infected people.
In \cite{li2019dynamic}, they introduce a nonlinear SIQS epidemic model on complex networks and study the optimal  quarantine control. Compared to previous works, our model is the first to consider an optimal control problem for SIR model on a spatial domain, combining optimal transport and mean field controls. As SIR model can be interpreted in terms of stochastic processes of agent-based models, it can be obtained  as a motion of the law of a three-state Markov chain with the transition from $S$ to $I$ and $I$ to $R$.\cite{allen2017primer}
Here we formulate velocity fields among S, I, R populations as control variables. And our model applies a pair of PDEs, consisting Fokker-Planck equation and Hamilton-Jacobi equation. These equations describe how different populations (susceptible, infected or recovered) react to the propagation of pandemic on a spatial domain. 

Our paper is organized as follows. In section \ref{section2}, we introduce the mean field control model for propagation of epidemics.  
We introduce a primal-dual hybrid gradient algorithm for this model in section \ref{section3}. In section \ref{section4}, several numerical examples are demonstrated. 

\section{Model}\label{section2}
In this section, we briefly review the classical epidemics models, e.g. SIR dynamics. We then introduce a mean field control model for SIR dynamics on a spatial domain. We derive a system to find the minimizer of the proposed model.
\subsection{Review}
We first review the classical SIR model.
\begin{equation*}
\left\{ \begin{aligned}
        \frac{dS(t)}{dt} &= - \beta S(t) I(t)\\
        \frac{dI(t)}{dt} &= \beta S(t) I(t) - \gamma I(t)\\
        \frac{dR(t)}{dt} &= \gamma I(t)
    \end{aligned}\right.
\end{equation*}
where $S, I, R:\ [0,T]\rightarrow [0,1]$ represent the proportion of the susceptible population, infected population, and recovered population, respectively, given time $t\in[0,T]$. Susceptible people become infected with a rate $\beta$ and infected people are recovered with a rate $\gamma$.  The SIR model can be derived based on the mean-field assumptions, thus it can be interpreted as the mean field equations for a three-state Markov chain on $S$, $I$, $R$ states.

\subsection{Spatial SIR variational problem}
We consider the spatial dimension of the $S$, $I$, $R$ functions. Let $\Omega\subset \mathbb{R}^d$ be a bounded domain. Consider the following functions
\begin{equation*}
    \rho_S, \rho_I, \rho_R:\ [0,T]\times \Omega \rightarrow [0,\infty).
\end{equation*}
Here, $\rho_S$, $\rho_I$, and $\rho_R$ represent susceptible, infected and recovered populations, respectively. We assume $\rho_i$ for each $i\in \{S,I,R\}$ moves on a spatial domain $\Omega$ with velocities $v_i$. We can describe these movements by continuity equations.
\begin{equation}\label{eq:continuity}
  \left\{ \begin{aligned}
       & \partial_t \rho_S + \nabla \cdot (\rho_S v_S) + \beta \rho_S \rho_I - \frac{\eta_S^2}{2} \Delta \rho_S= 0\\
        &\partial_t \rho_I + \nabla \cdot (\rho_I v_I) - \beta \rho_S \rho_I + \gamma \rho_I - \frac{\eta^2_I}{2} \Delta \rho_I = 0\\
        &\partial_t \rho_R + \nabla \cdot (\rho_R v_R) - \gamma \rho_I - \frac{\eta^2_R}{2} \Delta \rho_R= 0\\
        & \rho_S(0,\cdot), \rho_I(0,\cdot), \rho_R(0,\cdot) \text{ are given.}
    \end{aligned}\right.
\end{equation}
where $v_i: [0,T]\times\Omega \rightarrow \mathbb{R}^d$ ($i\in \{S,I,R\}$) are vector fields that represent the velocity fields for $\rho_i$ ($i\in \{S,I,R\}$) and nonnegative constants $\eta_i$ ($i\in \{S,I,R\}$) are coefficients for viscosity terms. We add these viscosity terms to regularize the systems of continuity equations, thus stabilize our numerical method that will be discussed in later sections. In addition, we assume zero flux conditions by the Neumann boundary conditions, that is no mass can flow in or out of $\Omega$. These systems of continuity equations satisfy the following equality:
\[
    \frac{\partial}{\partial t} \int_\Omega \rho_S(t,x)+\rho_I(t,x)+\rho_R(t,x) dx= 0,
\]
i.e., the total mass of the three populations will be conserved for all time.

Lastly, we introduce the proposed mean field control models. 
Consider the following variational problem:
\begin{equation}\label{var1}
    \begin{split}
        &\inf_{(\rho_i, v_i)_{i\in \{S,I,R\}}} \indent E(\rho_I(T,\cdot)) + \int^T_0  \int_\Omega \sum_{i\in \{S,I,R\}} \frac{\alpha_i}{2}\rho_i |v_i|^2 + \frac{c}{2}(\rho_S + \rho_I + \rho_R)^2 dx dt\\
        &\text{ subject to \eqref{eq:continuity} with fixed initial densities.}
    \end{split}
\end{equation}
Here $E$ is a convex functional and $\alpha_i$ ($i\in \{S,I,R\}$) and $c$ are nonnegative constants. The formulation is mainly divided into two parts: a terminal cost and a running cost. The functional $E$ is a terminal cost which increases if there is greater mass of infected population at the terminal time. For example, we choose $E(\rho(T,\cdot)) = \frac{1}{2}\int_\Omega \rho^2(T,x) dx$ for the experiments (Section~\ref{section4}). The rest of the terms besides the functional $E$ are running costs. Kinetic energy terms $\frac{\alpha_i}{2} \rho_i |v_i|^2$~$(i\in\{S,I,R\})$ represent the cost of moving the density $\rho_i$ with velocities $v_i$ over time $0\leq t \leq T$. A high value of $\alpha_i$ means it is expensive to move $\rho_i$ for corresponding $i\in\{S,I,R\}$. In the numerical experiments (Section~\ref{section4}), we assume $\alpha_S=\alpha_R=1$ and $\alpha_I=10$ to simulate the real life scenario where infected group is harder to move than other groups. The last term in the running cost, $\frac{c}{2}(\rho_S+\rho_I+\rho_R)^2$, penalizes congestion of the total population. A high value of $c$ means more penalization on the congestion. The minimizers of the variational problem will provide the optimal movements for each population while minimizing the terminal cost functional with respect to the infected population $\rho_I$. 

We note that the function $(\rho_i,v_i) \mapsto \rho_i |v_i|^2$ is not convex. By introducing new variables $m_i := \rho_i v_i$, we convert the cost function to be convex.
\begin{subequations}\label{eq:primal}
    \begin{equation}
        \min_{\rho_i, v_i} \indent P(\rho_i,m_i)_{i\in \{S,I,R\}} 
    \end{equation}    
        subject to
               \begin{equation}
        \left\{    \begin{aligned}
&\partial_t \rho_S + \nabla \cdot m_S + \beta \rho_S \rho_I - \frac{\eta_S^2}{2} \Delta\rho_S = 0\\
 &\partial_t \rho_I + \nabla \cdot m_I - \beta \rho_S \rho_I + \gamma \rho_I - \frac{\eta^2_I}{2} \Delta\rho_I = 0\\
& \partial_t \rho_R + \nabla \cdot m_R - \gamma \rho_I - \frac{\eta^2_R}{2} \Delta \rho_R= 0\\
&\rho_S(0,\cdot), \rho_I(0,\cdot), \rho_R(0,\cdot) \text{ are given}
    \end{aligned}\right.
\end{equation}
\end{subequations}
where
\begin{align*}
    P(\rho_i,m_i)_{i\in \{S,I,R\}} =& E(\rho_I(T,\cdot)) + \int^T_0  \int_\Omega \sum_{i\in \{S,I,R\}} \frac{\alpha_i |m_i|^2}{2\rho_i}  + \frac{c}{2}(\rho_S + \rho_I + \rho_R)^2 dx dt.
\end{align*}
From an optimization viewpoint, we note that the minimization problem is not a convex problem since the coupling terms, $\beta \rho_S \rho_I$, in constraints make the feasible set nonconvex. 
We replace the nonconvex coupling term $\beta\rho_S\rho_I$ with convolution. Note that Kendall \cite{kendall1965mathematical} introduced this kernel for modeling pandemic dynamics and took the nonlocal exposure to infectious agents into consideration. This term also helps regularize the minimization problem.
\begin{subequations}\label{eq:primal-kernel}
\begin{equation}
\min_{(\rho_i, v_i)_{i\in \{S,I,R\}}} \indent P(\rho_i, m_i)_{i\in \{S,I,R\}}
\end{equation}
subject to
\begin{equation}\label{eq:primal-kernel-constraint}
   \left\{ \begin{aligned}
        &  \partial_t \rho_S(t,x) + \nabla \cdot m_S(t,x) + \beta \rho_S(t,x) \int_\Omega K(x,y) \rho_I(t,y)dy - \frac{\eta_S^2}{2} \Delta\rho_S(t,x) = 0\\
        & \partial_t \rho_I(t,x) + \nabla \cdot m_I(t,x) - \beta \rho_I(t,x) \int_\Omega K(x,y)\rho_S(t,y) dy + \gamma \rho_I(t,x) - \frac{\eta^2_I}{2} \Delta\rho_I(t,x) = 0\\
        &  \partial_t \rho_R(t,x) + \nabla \cdot m_R(t,x) - \gamma \rho_I(t,x) - \frac{\eta^2_R}{2} \Delta \rho_R(t,x) = 0\\
        &  \rho_S(0,\cdot), \rho_I(0,\cdot), \rho_R(0,\cdot) \text{ given.}
    \end{aligned}\right.
\end{equation}
{Here, $K(x,y)$ is a symmetric positive definite kernel.}
\end{subequations}
In this paper, we focus on a Gaussian kernel
\[
    K(x,y) = \frac{1}{\sqrt{(2\pi)^d}} \prod^d_{k=1} \frac{1}{\sigma_k} \exp{\left(-\frac{|x_k-y_k|^2}{2\sigma_k^2}\right)}.
\]
The variance $\sigma_k$ of Gaussian kernel can be viewed as a parameter for modeling the spatial spreading effect of virus. Let's consider the convolution term in the first continuity equation, $\rho_S(t,x) \int_\Omega K(x,y)\rho_I(t,y)dy$. Larger values of variance $\sigma_k$'s in $K$ mean a susceptible agent located at position $x$ can be affected by infectious agents farther away from $x$. Note that by letting $\sigma_k\rightarrow 0$, we get
\[
    \rho_S(t,x) \int_\Omega K(x,y)\rho_I(t,y)dy \rightarrow \rho_S(t,x) \rho_I(t,x).
\]
Thus, when $\sigma_k$ becomes close to $0$, the susceptible agent is only affected by infectious agents nearby. If we let $\sigma_k \rightarrow \infty$, then
\[
    \rho_S(t,x) \int_\Omega K(x,y)\rho_I(t,y)dy \rightarrow \rho_S(t,x) \int_\Omega \rho_I(t,y) dy,
\]
which means the susceptible group is affected by the total number of infected population.

\begin{remark}
    The formulation is not limited to the SIR model we chose in this paper. It can be used to solve any types of spatial epidemiological models. For example, if we use SEIR model where E stands for exposed group, we just add one additional variable $\rho_E$ and add one more continuity equation.
\end{remark}

\subsection{Properties}
We next derive the mean field control system, i.e. the minimizer system associated with spatial SIR variational problem \eqref{eq:primal-kernel}. We introduce three dual variables $\phi_i$ $(i\in\{S,I,R\})$ to convert the minimization problem~\eqref{eq:primal-kernel} into a saddle problem.
\begin{equation*}
    \begin{aligned}
        &\inf_{(\rho_i,v_i)_{i\in\{S,I,R\}}} \left\{P(\rho_i,m_i)_{i\in\{S,I,R\}}: \text{subject to}~\eqref{eq:primal-kernel-constraint} \right\}
    \end{aligned}
\end{equation*}
\begin{equation*}
    \begin{aligned}
        =& \inf_{(\rho_i,v_i)_{i\in\{S,I,R\}}} \sup_{(\phi_i)_{i\in\{S,I,R\}}} P(\rho_i,m_i)_{i\in\{S,I,R\}}\\
        &\quad\quad - \int^T_0 \int_\Omega \phi_S \left( \partial_t \rho_S + \nabla \cdot m_S + \beta \rho_S K * \rho_I - \frac{\eta_S^2}{2} \Delta \rho_S \right) dx dt\\
        &\quad\quad - \int^T_0 \int_\Omega \phi_I \left( \partial_t \rho_I + \nabla \cdot m_I - \beta \rho_S K * \rho_I + \gamma \rho_I - \frac{\eta_I^2}{2} \Delta \rho_I \right) dx dt\\
        &\quad\quad - \int^T_0 \int_\Omega \phi_R \left( \partial_t \rho_R + \nabla \cdot m_R - \gamma \rho_I - \frac{\eta_R^2}{2} \Delta \rho_R \right) dx dt.
    \end{aligned}
\end{equation*}
Simplifying the above function, we define the Lagrangian functional
\begin{equation}\label{eq:lagrangian}
\begin{split}
    &\mathcal{L}((\rho_i,m_i,\phi_i)_{i\in \{S,I,R\}})\\
    = & P(\rho_i, m_i)_{i\in \{S,I,R\}}
    -  \int^T_0 \int_\Omega
     \sum_{i\in \{S,I,R\}} \phi_i \left( \partial_t \rho_i + \nabla \cdot m_i - \frac{\eta_i^2}{2} \Delta \rho_i \right) dx dt\\
        & + \int^T_0\int_\Omega \beta \rho_S (\phi_I - \phi_S) K * \rho_I + \gamma\rho_I (\phi_R - \phi_I) dx dt.
        \end{split}
\end{equation}
Thus, we have the following saddle problem:
\begin{equation}\label{eq:saddle}
    \inf_{(\rho_i,m_i)_{i\in \{S,I,R\}}} \sup_{(\phi_i)_{i\in \{S,I,R\}}} \mathcal{L}((\rho_i,m_i,\phi_i)_{i\in \{S,I,R\}}).
\end{equation}
The existence of the saddle point of this minimax problem is based on the assumption that the dual gap is zero. In other words, given a primal solution with respect to optimal primal variables $(\rho_i^*,m_i^*)_{i\in\{S,I,R\}}$ and a dual solution with respect to optimal dual variables $(\phi_i^*)_{i\in\{S,I,R\}}$, the difference between these two solutions is zero. However, the dual gap may not be zero for this problem because the nonconvex functional $(\rho_S,\rho_I) \mapsto \rho_S K * \rho_I$ makes feasible set of the problem nonconvex. Throughout the paper, we assume the dual gap is zero to get properties of saddle points.

    The following propositions are the properties of the saddle point problem derived from optimality conditions (Karush–Kuhn–Tucker (KKT) conditions).
\begin{proposition}[Mean-field control SIR system]\label{proposition:KKT-conditions}
By KKT conditions, the saddle point $(\rho_i^*, m_i^*, \phi_i^*)$ of \eqref{eq:saddle} satisfies the following equations.
    \begin{equation}\label{MFSIR}
        \left\{\begin{aligned}
            &\partial_t \phi_S^* - \frac{\alpha_S}{2} |\nabla \phi_S^*|^2 + \frac{\eta^2_S}{2}\Delta \phi_S^* + c(\rho_S^*+\rho_I^*+\rho_R^*) + \beta (\phi_I^* - \phi_S^*) K * \rho_I^*  = 0\\
            &\partial_t \phi_I^* - \frac{\alpha_I}{2} |\nabla \phi_I^*|^2 + \frac{\eta^2_I}{2}\Delta \phi_I^* + c(\rho_S^*+\rho_I^*+\rho_R^*)\\
            &\hspace{4.5cm} + \beta K * \left( \rho_S (\phi_I - \phi_S) \right) + \gamma (\phi_R^* - \phi_I^*) = 0\\
            &\partial_t \phi_R^* - \frac{\alpha_R}{2} |\nabla \phi_R^*|^2 + \frac{\eta^2_R}{2}\Delta \phi_R^* + c(\rho_S^*+\rho_I^*+\rho_R^*)= 0\\
            & \partial_t \rho_S^* -\frac{1}{\alpha_S}\nabla \cdot (\rho_S^*\nabla\phi_S^*) + \beta \rho_S^* K * \rho_I^* - \frac{\eta_S^2}{2} \Delta\rho_S^* = 0\\
            &\partial_t \rho_I^* -\frac{1}{\alpha_I}\nabla \cdot(\rho_I^* \nabla\phi_I^*) - \beta \rho_S^* K *\rho_I^* + \gamma \rho_I^* - \frac{\eta^2_I}{2} \Delta\rho_I^* = 0\\
            & \partial_t \rho_R^* -\frac{1}{\alpha_R} \nabla \cdot (\rho_R^* \nabla\phi_R^*) - \gamma \rho_I^* - \frac{\eta^2_R}{2} \Delta \rho_R^*= 0\\
            &\phi_I^*(T,\cdot) = \delta E (\rho_I^*(T,\cdot)).
        \end{aligned}\right.
    \end{equation}
\end{proposition}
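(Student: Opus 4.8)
The plan is to obtain the system \eqref{MFSIR} as the first-order (KKT) optimality conditions of the saddle problem \eqref{eq:saddle}: I would compute the first variation of the Lagrangian $\mathcal{L}$ in \eqref{eq:lagrangian} with respect to each block of unknowns, namely the dual variables $\phi_i$, the momenta $m_i$, and the densities $\rho_i$ (for $i\in\{S,I,R\}$), and set each of these variations to zero. All computations are formal: I assume the saddle point $(\rho_i^*,m_i^*,\phi_i^*)$ is smooth enough, that $\rho_i^*>0$ so the running cost $|m_i|^2/\rho_i$ is differentiable, that first variations commute with the space-time integrals, and --- as already assumed immediately before the statement --- that the duality gap vanishes, so that a saddle point genuinely characterizes optimality. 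The zero-flux Neumann boundary conditions on $\partial\Omega$ are invoked to annihilate every spatial boundary term produced by integration by parts.

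First I would vary in $\phi_i$ and in $m_i$, the easy half. Since $\mathcal{L}$ is affine in each $\phi_i$, the condition $\delta_{\phi_i}\mathcal{L}=0$ merely reproduces the $i$-th equation of the constraint \eqref{eq:primal-kernel-constraint}. For $m_i$, the only terms of $\mathcal{L}$ involving $m_i$ are $\int_0^T\!\int_\Omega \frac{\alpha_i|m_i|^2}{2\rho_i}\,dx\,dt$ coming from $P$ and, after one integration by parts in $x$, $-\int_0^T\!\int_\Omega \phi_i\,\nabla\cdot m_i\,dx\,dt=\int_0^T\!\int_\Omega \nabla\phi_i\cdot m_i\,dx\,dt$; hence $\delta_{m_i}\mathcal{L}=0$ gives $\frac{\alpha_i m_i^*}{\rho_i^*}+\nabla\phi_i^*=0$, i.e. $m_i^*=-\frac{1}{\alpha_i}\rho_i^*\nabla\phi_i^*$ (equivalently $v_i^*=-\frac{1}{\alpha_i}\nabla\phi_i^*$). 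Substituting this back into the constraint turns each $\nabla\cdot m_i^*$ into $-\frac{1}{\alpha_i}\nabla\cdot(\rho_i^*\nabla\phi_i^*)$, which yields the last three (Fokker--Planck type) equations of \eqref{MFSIR}.

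Next I would vary in $\rho_i$ on the interior $(0,T)\times\Omega$, tracking the contribution of each term of $\mathcal{L}$ to $\delta_{\rho_i}\mathcal{L}$: integrating $-\int\!\int\phi_i\,\partial_t\rho_i$ by parts in $t$ produces $+\partial_t\phi_i$ together with a terminal boundary term handled last; integrating $-\int\!\int\phi_i\,(-\frac{\eta_i^2}{2}\Delta\rho_i)$ by parts twice in $x$ produces $+\frac{\eta_i^2}{2}\Delta\phi_i$; the congestion term $\frac{c}{2}(\rho_S+\rho_I+\rho_R)^2$ produces $c(\rho_S^*+\rho_I^*+\rho_R^*)$ in each equation; the kinetic term $\frac{\alpha_i|m_i|^2}{2\rho_i}$ produces $-\frac{\alpha_i|m_i^*|^2}{2(\rho_i^*)^2}$, which after the substitution $m_i^*=-\frac{1}{\alpha_i}\rho_i^*\nabla\phi_i^*$ collapses to the quadratic-in-$\nabla\phi_i^*$ (Hamiltonian) term in \eqref{MFSIR}; and the coupling line $\int\!\int \beta\rho_S(\phi_I-\phi_S)\,K*\rho_I+\gamma\rho_I(\phi_R-\phi_I)$ contributes $\beta(\phi_I^*-\phi_S^*)\,K*\rho_I^*$ to $\delta_{\rho_S}\mathcal{L}$, nothing to $\delta_{\rho_R}\mathcal{L}$, and to $\delta_{\rho_I}\mathcal{L}$ both $\gamma(\phi_R^*-\phi_I^*)$ (from the explicit factor $\rho_I$) and the nonlocal term $\beta\,K*(\rho_S^*(\phi_I^*-\phi_S^*))$ (from the occurrence of $\rho_I$ inside $K*\rho_I$). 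Setting $\delta_{\rho_i}\mathcal{L}=0$ for each $i$ gives exactly the three Hamilton--Jacobi equations of \eqref{MFSIR}. Finally, collecting the boundary-in-time terms together with the only remaining term supported on the terminal slice, $E(\rho_I(T,\cdot))$, and noting that $\rho_i(0,\cdot)$ is prescribed (so its variation vanishes), the coefficient of $\delta\rho_i(T,\cdot)$ is $-\phi_i^*(T,\cdot)$ for $i\in\{S,R\}$ and $\delta E(\rho_I^*(T,\cdot))-\phi_I^*(T,\cdot)$ for $i=I$; stationarity then yields $\phi_S^*(T,\cdot)=\phi_R^*(T,\cdot)=0$ and the stated $\phi_I^*(T,\cdot)=\delta E(\rho_I^*(T,\cdot))$.

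The one genuinely non-routine step, and the place I would be most careful, is the variation of the nonlocal coupling with respect to $\rho_I$: one has to move the convolution off $\rho_I$ and onto the test variation by using the symmetry $K(x,y)=K(y,x)$, i.e. $\int_\Omega \rho_S(x)(\phi_I-\phi_S)(x)\,(K*\rho_I)(x)\,dx=\int_\Omega \big(K*(\rho_S(\phi_I-\phi_S))\big)(y)\,\rho_I(y)\,dy$, which is precisely why the $\phi_I$-equation of \eqref{MFSIR} carries $K*(\rho_S^*(\phi_I^*-\phi_S^*))$ rather than $(\phi_I^*-\phi_S^*)\,K*\rho_S^*$. Everything else is careful sign bookkeeping through the time and space integrations by parts. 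The deeper issue --- that the coupling $(\rho_S,\rho_I)\mapsto\rho_S\,K*\rho_I$ is nonconvex, so the KKT system need not be sufficient for optimality, and existence/regularity of the saddle point is not guaranteed a priori --- is not addressed here and is absorbed into the standing hypotheses flagged before the statement.
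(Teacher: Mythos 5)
Your proposal is correct and follows essentially the same route as the paper's own proof: form the Lagrangian, set the first variations in $m_i$, $\rho_i$, $\phi_i$ and $\rho_I(T,\cdot)$ to zero, obtain $m_i^*=-\frac{1}{\alpha_i}\rho_i^*\nabla\phi_i^*$ and $\phi_I^*(T,\cdot)=\delta E(\rho_I^*(T,\cdot))$, and substitute back, with your treatment of the symmetric kernel (moving $K$ onto $\rho_S(\phi_I-\phi_S)$ in the $\rho_I$-variation) being exactly the step the paper leaves implicit. One small caveat: the substitution you describe actually yields the Hamiltonian coefficient $\frac{1}{2\alpha_i}|\nabla\phi_i^*|^2$ rather than the $\frac{\alpha_i}{2}|\nabla\phi_i^*|^2$ written in \eqref{MFSIR}, a discrepancy that lies in the paper's stated system (its Fokker--Planck equations carry $\frac{1}{\alpha_i}$), not in your argument.
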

\begin{proof}
By integration by parts, we reformulate the Lagrangian function \eqref{eq:saddle} as follows. 
    \begin{equation*}
        \begin{aligned}
        &L((\rho_i,m_i,\phi_i)_{i\in \{S,I,R\}}) \\ =&E(\rho_I(T,\cdot)) + \int^T_0\int_\Omega \frac{c}{2}(\rho_S + \rho_I + \rho_R)^2 + \beta \rho_S (\phi_I - \phi_S) K * \rho_I + \gamma\rho_I (\phi_R - \phi_I) dx dt\\
            & + \sum_{i\in \{S,I,R\}} \int^T_0 \int_\Omega \frac{\alpha_i |m_I|^2}{2 \rho_i} + \rho_i \partial_t \phi_i + m_i \cdot \nabla \phi_i + \frac{\eta_i^2}{2} \rho_i \Delta \phi_i dx dt\\
            & + \sum_{i\in \{S,I,R\}} \int_\Omega \rho_i(0,x) \phi_i(0,x) - \rho_i(T,x) \phi_i(T,x) dx
        \end{aligned}
    \end{equation*}
    If $(\rho_i^*, m_i^*, \phi_i^*)$ are saddle points of the Lagrangian, the differential of Lagrangian with respect to $\rho_i$, $m_i$, $\phi_i$ ($i\in \{S,I,R\}$) and $\rho_I(T,\cdot)$ equal to zero. We have
    \begin{equation*}
    \begin{aligned}
    \frac{\delta L}{\delta\rho_i}(\rho_i^*, m_i^*, \phi_i^*) =
        \frac{\delta L}{\delta m_i}(\rho_i^*, m_i^*, \phi_i^*)=
        \frac{\delta L}{\delta\phi_i}(\rho_i^*, m_i^*, \phi_i^*)=
        \frac{\delta L}{\delta\rho_I(T,\cdot)}(\rho_i^*, m_i^*, \phi_i^*)=0.
    \end{aligned}
    \end{equation*}
    From $\frac{\delta L}{\delta\rho_I(T,\cdot)}(\rho_i^*, m_i^*, \phi_i^*)=0$ and $\frac{\delta L}{\delta m_i}(\rho_i^*, m_i^*, \phi_i^*)=0$, we have $\phi_I^*(T,\cdot) = \delta E (\rho_I^*(T,\cdot))$ and $m_i^* = - \frac{1}{\alpha_i} \rho_i^*\nabla \phi_i^*$ ($i\in\{S,I,R\}$), respectively. Plugging in these equations into the rest of the equations, we derive the result.
\end{proof}
We note that dynamical system \eqref{MFSIR} models the optimal vector field strategies for S, I, R populations. It combines both strategies from mean field controls and SIR models. For this reason, we call \eqref{MFSIR} {\em Mean-field control SIR system}.
\section{Algorithm}\label{section3}
In this section, we implement optimization methods to solve the proposed SIR variational problems. Specifically, we use G-Prox Primal Dual Hybrid Gradient (G-Prox PDHG) method \cite{JacobsLegerLiOsher2018_solvinga}. This is a variation of Chambolle-Pock primal-dual algorithm \cite{champock11,champock16}. G-Prox PDHG proposes a way of choosing proper norms for the optimization  based on the given minimization problem whereas Chambolle-Pock primal-dual algorithm just uses $L^2$ norms. Choosing appropriate norms results in faster and more robust convergence of the algorithm.

\subsection{Review of primal-dual algorithms}

The PDHG method solves the minimization problem
\[
    \min_x f(Ax) + g(x)
\]
by converting it into a saddle point problem
\[
    \min_x \sup_y \left\{ L(x,y) := \langle Ax, y \rangle + g(x) - f^*(y) \right\}.
\]
Here, $f$ and $g$ are convex functions with respect to a variable $x$, $A$ is a continuous linear operator, and
\[
    f^*(y) = \sup_{x} x\cdot y - f(x)
\] 
is a Legendre transform of $f$. For each iteration, the algorithm finds the minimizer $x_*$ by gradient descent method and the maximizer $y_*$ by gradient ascent method. Thus, the minimizer and maximizer are calculated by iterating
\begin{equation*}
    \begin{cases}
        x^{k+1} &= \argmin_x L(x,y^k) + \frac{1}{2\tau} \|x - x^k\|^2\\
        y^{k+\frac{1}{2}} &= \argmax_y L(x^{k+1},y) + \frac{1}{2\sigma} \|y - y^k\|^2\\
        y^{k+1} &= 2 y^{k+\frac{1}{2}} - y^k
    \end{cases}
\end{equation*}
where $\tau$ and $\sigma$ are step sizes for the algorithm.

G-Prox PDHG is a modified version of PDHG that solves the minimization problem by choosing the most appropriate norms for updating $x$ and $y$. Choosing the appropriate norms allows us to choose larger step sizes. Hence, we get a faster convergence rate. In details, 
\begin{equation*}
    \begin{cases}
        x^{k+1} &= \argmin_x L(x,y^k) + \frac{1}{2\tau} \|x - x^k\|^2_{\mathcal{H}}\\
        y^{k+\frac 1 2} &= \argmax_y L(x^{k+1},y) + \frac{1}{2\sigma} \|y - y^k\|^2_{\mathcal{G}}\\
        y^{k+1} &= 2 y^{k+\frac{1}{2}} - y^k
    \end{cases}
\end{equation*}
where $\mathcal{H}$ and $\mathcal{G}$ are some Hilbert spaces with the inner product
\[
    (u_1,u_2)_{\mathcal{G}} = (A u_1, A u_2)_{\mathcal{H}}.
\] 
In particular, we use G-Prox PDHG to solve the minimization problem \eqref{eq:primal-kernel} by setting $\mathcal{H} = L^2$ and $\mathcal{G} = H^2$. Furthermore,
\begin{equation*}
x = (\rho_S, \rho_I, \rho_R, m_S, m_I, m_R), \quad
    g(x) = P(\rho_i,m_i)_{i\in \{S,I,R\}},\quad f(Ax) = 
    \begin{cases}
     0 & \text{if } Ax = (0,0,\gamma\rho_I)\\
     \infty & \text{otherwise.}
    \end{cases}
\end{equation*}
\begin{equation*}
    \begin{aligned}
        Ax = (& \partial_t \rho_S + \nabla \cdot m_S - \frac{\eta^2}{2} \Delta \rho_S + \beta\rho_S K * \rho_I,\\
              & \partial_t \rho_I + \nabla \cdot m_I - \frac{\eta^2}{2} \Delta \rho_I - \beta\rho_I K * \rho_S + \gamma \rho_I,\\
              &\partial_t \rho_R + \nabla \cdot m_R - \frac{\eta^2}{2} \Delta \rho_R ).
    \end{aligned}
\end{equation*}
Thus, we have the following inner products
\begin{align*}
    (u_1,u_2)_{L^2} = \int^T_0\int_\Omega u_1(t,x) u_2(t,x) dx dt, \quad (u_1,u_2)_{H^2} = \int^T_0 \int_\Omega A u_1(t,x) A u_2(t,x) dx dt.
\end{align*}
Note that the operator $A$ is not linear. In the implementation, we approximate the operator with the following linear operator
\begin{equation*}
    \begin{aligned}
        Ax \approx (& \partial_t \rho_S + \nabla \cdot m_S - \frac{\eta^2}{2} \Delta \rho_S + \beta\rho_S,\\
              & \partial_t \rho_I + \nabla \cdot m_I - \frac{\eta^2}{2} \Delta \rho_I + (\gamma + \beta) \rho_I,\\
              &\partial_t \rho_R + \nabla \cdot m_R - \frac{\eta^2}{2} \Delta \rho_R ).
    \end{aligned}
\end{equation*}

\subsection{G-Prox PDHG on SIR variational problem}

In this section, we implement G-Prox PDHG to solve the saddle problem \eqref{eq:saddle}. For $i\in \{S,I,R\}$,
\begin{equation*}
\begin{split}
    \rho_i^{(k+1)} &= \argmin_\rho \mathcal{L}(\rho, m_i^{(k)},\phi_i^{(k)}) + \frac{1}{2\tau_i} \|\rho - \rho_i^{(k)}\|^2_{L^2}\\
    m_i^{(k+1)} &= \argmin_m \mathcal{L}(\rho_i^{(k+1)}, m,\phi_i^{(k)}) + \frac{1}{2\tau_i} \|m - m_i^{(k)}\|^2_{L^2}\\
    \phi_i^{(k+ \frac 1 2)} &= \argmax_\phi \mathcal{L}(\rho_i^{(k+1)}, m_i^{(k+1)},\phi) - \frac{1}{2\sigma_i} \|\phi - \phi_i^{(k)}\|^2_{H^2}\\
    \phi_i^{(k+1)} &= 2 \phi_i^{(k+\frac 1 2)} - \phi_i^{(k)}
 \end{split}
\end{equation*}
where $\tau_i$, $\sigma_i$ ($i\in \{S,I,R\}$) are step sizes for the algorithm and by G-Prox PDHG, $L^2$ norm and $H^2$ norm are defined as
\begin{align*}
        \|u\|^2_{L^2} = \int^T_0 \int_\Omega u^2 dx dt, \quad \|u\|^2_{H^2} = \int^T_0 \int_\Omega (\partial_t u)^2 + |\nabla u|^2 + \frac{\eta^4}{4} (\Delta u)^2 dx dt
\end{align*}
for any $u : [0,T] \times \Omega \rightarrow [0,\infty)$. \\

By formulating these optimality conditions, we can find explicit formulas for each variable.

\begin{equation*}
\begin{split}
    \rho_S^{(k+1)} &= root_+\Biggl(\frac{\tau_S}{1+ c\tau_S} \biggl(\partial_t\phi_S^{(k)} + \frac{\eta_S^2}{2} \Delta\phi_S^{(k)} - \frac{1}{\tau_S} \rho_S^{(k)} + \beta\left( K*(\phi_I^{(k)} \rho_I^{(k)}) - \phi_S^{(k)} K * \rho_I^{(k)} \right)\\
    & \hspace{1.5cm}  + c(\rho_I + \rho_R)\biggl), 0, - \frac{\tau_S \alpha_S (m_S^{(k)})^2}{2(1+c\tau_S)}
    \Biggl)\\
    \rho_I^{(k+1)} &= root_+\Biggl(\frac{\tau_I}{1+ c\tau_I} \biggl(\partial_t\phi_I^{(k)} + \frac{\eta_I^2}{2} \Delta\phi_I^{(k)} - \frac{1}{\tau_I} \rho_I^{(k)} + \beta\left( \phi_I^{(k)} K * \rho_S^{(k)} - K * (\phi_S^{(k)} \rho_S^{(k)}) \right)\\
    & \hspace{1.5cm}  + \gamma (\phi_R - \phi_I) + c(\rho_S + \rho_R)\biggl), 0, - \frac{\tau_I \alpha_I (m_I^{(k)})^2}{2(1+c\tau_I)}
    \Biggl)\\
    \rho_R^{(k+1)} &= root_+\Biggl(\frac{\tau_R}{1+ c\tau_R} \biggl(\partial_t\phi_R^{(k)} + \frac{\eta_R^2}{2} \Delta\phi_R^{(k)} - \frac{1}{\tau_R} \rho_R^{(k)} + c(\rho_S + \rho_I)\biggl), 0, - \frac{\tau_R \alpha_R (m_R^{(k)})^2}{2(1+c\tau_R)}
    \Biggl)
\end{split}
\end{equation*}

\begin{flalign*}
   m_i^{(k+1)} &= \frac{\rho_i^{(k+1)}}{\tau\alpha_i + \rho_i^{(k+1)}} \left( m_i^{(k)} - \tau \nabla \phi_i^{(k)} \right),\indent (i\in \{S,I,R\})
\end{flalign*}
\begin{flalign*}
   &\phi_S^{(k+1)} = \phi_S^{(k)} + \sigma_S (A_S^T A_S)^{-1} \left( -\partial_t \rho^{(k+1)}_S - \nabla \cdot m^{(k+1)}_S - \beta \rho^{(k+1)}_S K * \rho^{(k+1)}_I  + \frac{\eta_S^2}{2} \Delta \rho^{(k+1)}_S \right)&
\end{flalign*}
\begin{flalign*}
    \phi_I^{(k+\frac 1 2)} &= \phi_I^{(k)} + \sigma_I (A_I^T A_I)^{-1} \biggl( -\partial_t \rho^{(k+1)}_I - \nabla \cdot m^{(k+1)}_I + \beta  \rho^{(k+1)}_I K * \rho^{(k+1)}_S\\
    &\hspace{7.2cm} - \gamma \rho^{(k+1)}_I  + \frac{\eta_I^2}{2} \Delta \rho^{(k+1)}_I \biggl)&
\end{flalign*}

\begin{flalign*}
    \phi_R^{(k+\frac 1 2)} &= \phi_R^{(k)} + \sigma_R (A_R^T A_R)^{-1} \left( -\partial_t \rho^{(k+1)}_R - \nabla \cdot m^{(k+1)}_R +  \gamma \rho^{(k+1)}_I + \frac{\eta_R^2}{2} \Delta \rho^{(k+1)}_R\right)&
\end{flalign*}
where $root_+(a,b,c)$ is a positive root of a cubic polynomial $x^3 + a x^2 + b x +c = 0$ and 
\begin{equation*}
    \begin{split}
        A_S^{T}A_S &= -\partial_{tt} + \frac{\eta_S^4}{4} \Delta^2 - (1 + 2 \beta \eta_S) \Delta + \beta^2\\
        A_I^{T}A_I &= -\partial_{tt} + \frac{\eta_I^4}{4} \Delta^2 - (1 + 2 (\gamma+\beta) \eta_S) \Delta + (\gamma + \beta)^2\\
        A_R^{T}A_R &= -\partial_{tt} + \frac{\eta_R^4}{4} \Delta^2 - \Delta.
    \end{split}
\end{equation*}
We use FFTW library to compute $(A_i^T A_i)^{-1}$ ($i\in \{S,I,R\}$) and convolution terms by Fast Fourier Transform (FFT), which is $O(n\log n)$ operations per iteration where $n$ is the number of points. Thus, the algorithm takes just $O(n\log n)$ operations per iteration.

In all, we summarize the algorithm as follows. 
\begin{tabbing}
aaaaa\= aaa \=aaa\=aaa\=aaa\=aaa=aaa\kill  
   \rule{\linewidth}{0.8pt}\\
   \noindent{\large\bf Algorithm: G proximal PDHG for mean-field control SIR system}\\
  \1 \textbf{Input}: $\rho_i(0,\cdot)$ ($i\in \{S,I,R\}$)\\
  \1 \textbf{Output}: $\rho_i, m_i, \phi_i$ ($i\in \{S,I,R\}$) for $x\in\Omega$, $t\in[0,T]$\\
   \rule{\linewidth}{0.5pt}\\
  \1 \textbf{While} relative error $>$ tolerance  \\
\2 $\rho_i^{(k+1)} = \argmin_\rho \mathcal{L}(\rho, m_i^{(k)},\phi_i^{(k)}) + \frac{1}{2\tau_i} \|\rho - \rho_i^{(k)}\|^2_{L^2}$\\
\2     $m_i^{(k+1)} = \argmin_m \mathcal{L}(\rho^{(k+1)}, m,\phi_i^{(k)}) + \frac{1}{2\tau_i} \|m - m_i^{(k)}\|^2_{L^2}$\\
\2     $\phi_i^{(k+ \frac 1 2)} = \argmax_\phi \mathcal{L}(\rho^{(k+1)}, m_i^{(k+1)},\phi) - \frac{1}{2\sigma_i} \|\phi - \phi_i^{(k)}\|^2_{H^2}$\\
\2     $\phi_i^{(k+1)} = 2 \phi_i^{(k+\frac 1 2)} - \phi_i^{(k)}$\\
  \1 \End\\
   \rule{\linewidth}{0.8pt}
\end{tabbing}
Here, the relative error is defined as
\[
\text{relative error} = \frac{|P(\rho_i^{(k+1)},m_i^{(k+1)})- P(\rho_i^{(k)},m_i^{(k)})|}{|P(\rho_i^{(k)},m_i^{(k)})|}.
\]

\section{Experiments}\label{section4}
In this section, we present several sets of numerical experiments using the algorithm with various parameters. We wrote C++ codes to run the numerical experiments. Let $\Omega = [0,1]^2$ be a unit cube in $\mathbb{R}^2$ and $T=1$. The domain $\Omega$ is discretized with the regular rectangular mesh
\[\Delta x = \frac{1}{N_x},\quad \Delta y = \frac{1}{N_y},\quad \Delta t = \frac{1}{N_t-1}\]
\begin{equation*}
    \begin{aligned}
        x_{kl} &= \left( (k+0.5) \Delta x, (l+0.5)\Delta y \right), && k = 0,\cdots,N_x-1,\quad l = 0,\cdots,N_y-1\\
        t_n &= n \Delta t , && n = 0,\cdots,N_t-1
    \end{aligned}
\end{equation*}
where $N_x$, $N_y$ are the number of data points in space and $N_t$ is the number of data points in time.
For all the experiments, we use the same set of parameters,
\[
    N_x = 128, \quad N_y = 128,\quad N_t = 32
\]
\[
    \sigma=0.02,\quad c = 0.01,\quad \eta_i = 0.01 \quad(i\in \{S,I,R\})
\]
\[
    \alpha_S = 1, \quad \alpha_I = 10, \quad \alpha_R = 1
\]
and choose the same terminal cost functional
\[
    E(\rho_I(1,\cdot)) = \frac 1 2 \int_\Omega \rho_I^2(1,x) dx.
\]
By setting higher value for $\alpha_I$, we penalize the movement of infected population more than other populations. Considering the immobility of infected individuals, this is a reasonable choice in terms of real-world applications.

To minimize the terminal cost functional $E(\rho_I(T,\cdot))$, a solution needs to reduce the number of infected population. There are mainly two ways of reducing the number of infected. One way is to recover infected to recovered population. However, it may not be feasible if a rate of recovery $\gamma$ is small. Another way to reduce the number of infected is by separating susceptible population from infected population. The number of infected doesn't increase if there are no susceptible people near infected. However, the total cost increases when densities move due to the kinetic energy term $\rho_i |v_i|^2$ ($i\in \{S,I,R\}$) in the running cost. A solution needs to find the optimal balance between the terminal cost and the running cost. Experiment~1 shows the effectiveness of controlling populations' movements. We compute two solutions of the model: with and without control of movements. The comparison between these solutions shows that the number of infected people at the terminal time can be reduced effectively with control. Experiment~2 shows that the algorithm finds the proper solutions based on different recovery rates given nonsymmetric initial densities. In Experiment~3, we consider a more complicating terminal energy functional $E(\rho_I(T,\cdot))$, and compute the solutions based on different infection rates.

\subsection{Experiment 1}

In this experiment, we compare the solutions of SIR model with and without control. We set initial densities for susceptible, infected and recovered populations as
\begin{equation*}
    \begin{aligned}
        \rho_S(0,x=(x_1,x_2)) &= 0.6 \exp{\Bigl(-10\bigl((x_1-0.5)^2 + (x_2-0.5)^2 \bigr)\Bigr)}\\
        \rho_I(0,x=(x_1,x_2)) &= 0.6 \exp{\Bigl(-35\bigl((x_1-0.6)^2 + (x_2-0.6)^2 \bigr)\Bigr)}\\
        \rho_R(0,x=(x_1,x_2)) &= 0
    \end{aligned}
\end{equation*}
Susceptible population and infected population are Gaussian distributions centered at $(0.5,0.5)$ and $(0.6,0.6)$, respectively. We set $\beta = 0.7$ and $\gamma = 0.1$.

We show two different numerical results: one with control and one without control. The formulation without control has the following system of equations,
\begin{equation*}
\begin{aligned}
        \frac{\partial\rho_S(t,x)}{dt} &= - \beta \rho_S(t,x) \rho_I(t,x)\\
        \frac{\partial\rho_I(t,x)}{dt} &= \beta \rho_S(t,x) \rho_I(t,x) - \gamma \rho_I(t,x)\\
        \frac{\partial\rho_R(t,x)}{dt} &= \gamma \rho_I(t,x).
    \end{aligned}
\end{equation*}
By removing the velocity terms, we assume no movements of population. We solve these equations by using Euler's method. Thus, the solution can be computed by iterating $n=0,\cdots,N_t-2$,
\begin{align*}
    \rho_S(t_{n+1},x_{kl}) &= \rho_S(t_n,x_{kl}) - \Delta t \beta  \rho_S(t_n,x_{kl}) \rho_I(t_n,x_{kl})\\
    \rho_I(t_{n+1},x_{kl}) &= \rho_I(t_n,x_{kl}) + \Delta t \left(\beta  \rho_S(t_n,x_{kl}) \rho_I(t_n,x_{kl}) - \gamma \rho_I(t_n,x_{kl}) \right)\\
    \rho_R(t_{n+1},x_{kl}) &= \rho_R(t_n,x_{kl}) + \Delta t \gamma  \rho_I(t_n,x_{kl}),
\end{align*}
for $k = 0,\cdots,N_x-1, l=0,\cdots,N_y-1$. The results can be seen in Figure~\ref{fig:exp1-terminal-time-sir} and Figure~\ref{fig:sir-comparison-graphs}. Figure~\ref{fig:exp1-terminal-time-sir} shows snapshots of the initial and terminal densities. The first row shows the initial densities of susceptible, infected and recovered (from left to right) based on the equations above. The second row and the third row show the terminal densities without control and with control, respectively. Figure~\ref{fig:sir-comparison-graphs} shows a quantitative comparison between these two solutions. The graphs indicate the total sum of each group over time. More specifically, they show $\int_\Omega \rho_i(t,x) dx$ for $i\in\{S,I,R\}$ from $t=0$ to $t=1$. 

In Figure~\ref{fig:exp1-terminal-time-sir}, when we compare the susceptible groups from second and third rows, the susceptible group with control moves more than the susceptible group without control. If there is no control (the second row in Figure~\ref{fig:exp1-terminal-time-sir}), the groups don't move and the susceptible group is exposed to infected group which leads to a high chance of susceptible being infected over time. If population is in control (the third row in Figure~\ref{fig:exp1-terminal-time-sir}), we see a clear separation between susceptible and infected at the terminal time. This separation decreases the exposure of susceptible to infected effectively and, as a result, we see less number of infected and more number of susceptible at the terminal time from the solution with control.

\begin{figure}[h]
    \begin{minipage}[b]{0.33\linewidth}
      \centering
      \includegraphics[width=.7\linewidth]{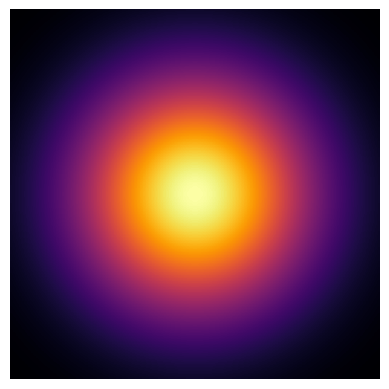}
    \end{minipage}\hfil
    \begin{minipage}[b]{0.33\linewidth}
      \centering
      \includegraphics[width=.7\linewidth]{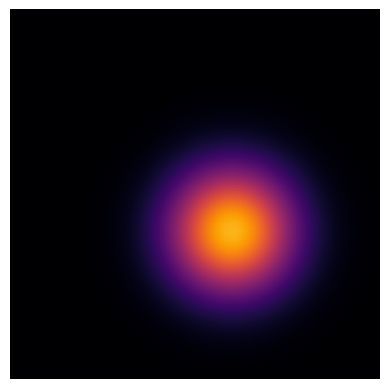}
    \end{minipage}\hfil
    \begin{minipage}[b]{0.33\linewidth}
      \centering
      \includegraphics[width=.7\linewidth]{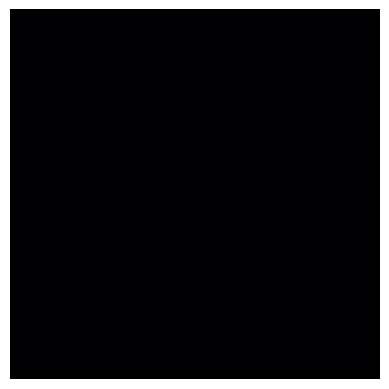}
    \end{minipage}\hfil
    \begin{minipage}[b]{0.33\linewidth}
      \centering
      \includegraphics[width=.7\linewidth]{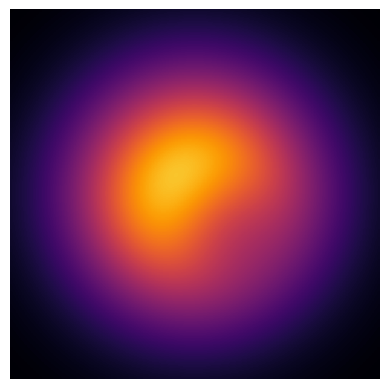}
    \end{minipage}\hfil
    \begin{minipage}[b]{0.33\linewidth}
      \centering
      \includegraphics[width=.7\linewidth]{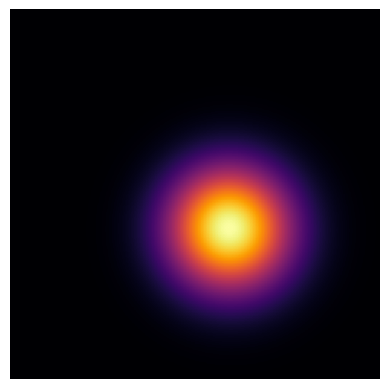}
    \end{minipage}\hfil
    \begin{minipage}[b]{0.33\linewidth}
      \centering
      \includegraphics[width=.7\linewidth]{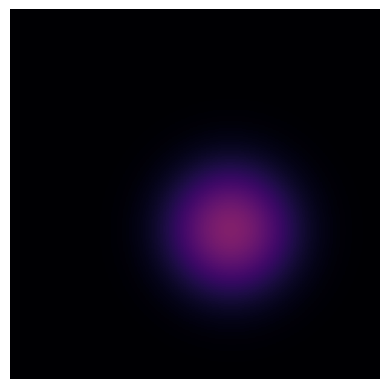}
    \end{minipage}\hfil
    \begin{minipage}[b]{0.33\linewidth}
      \centering
      \includegraphics[width=.7\linewidth]{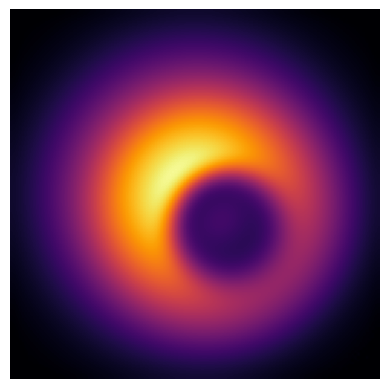}
    \end{minipage}\hfil
    \begin{minipage}[b]{0.33\linewidth}
      \centering
      \includegraphics[width=.7\linewidth]{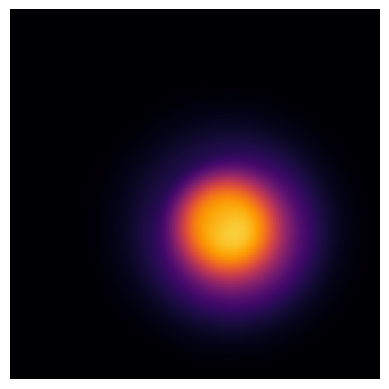}
    \end{minipage}\hfil
    \begin{minipage}[b]{0.33\linewidth}
      \centering
      \includegraphics[width=.7\linewidth]{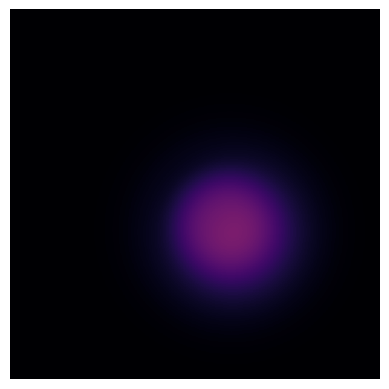}
    \end{minipage}\hfil
    \caption{Snapshots of susceptible (column 1), infected (column 2) and recovered populations (column 3). The first row shows the initial densities, the second row shows the solution without control at the terminal time and the third row shows the solution with control at the terminal time.}
    \label{fig:exp1-terminal-time-sir}
\end{figure}

\begin{figure}
     \centering
     \begin{subfigure}[b]{0.32\textwidth}
         \centering
         \includegraphics[width=\textwidth]{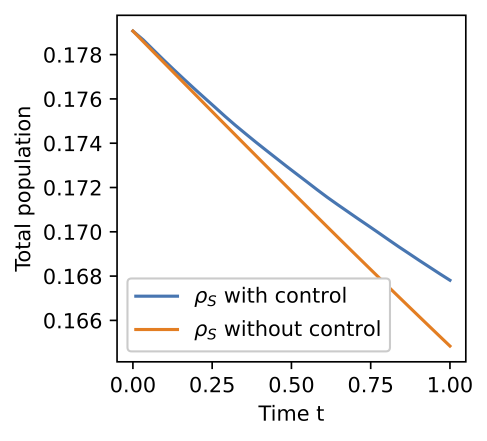}
         \caption{$\rho_S$}
         \label{fig:rho_S_comparison}
     \end{subfigure}
     \hfill
     \begin{subfigure}[b]{0.32\textwidth}
         \centering
         \includegraphics[width=\textwidth]{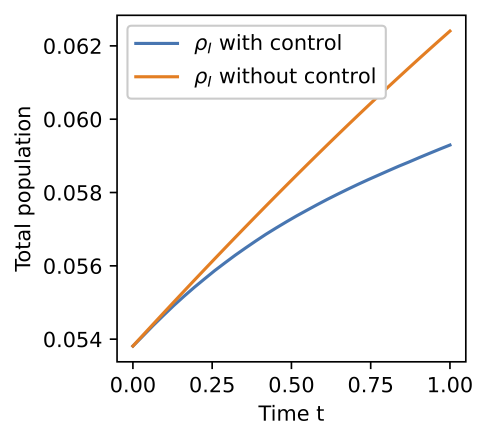}
         \caption{$\rho_I$}
         \label{fig:rho_I_comparison}
     \end{subfigure}
     \hfill
     \begin{subfigure}[b]{0.32\textwidth}
         \centering
         \includegraphics[width=\textwidth]{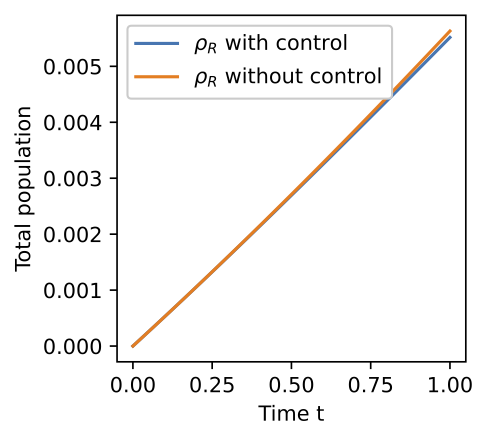}
         \caption{$\rho_R$}
         \label{fig:rho_R_comparison}
     \end{subfigure}
        \caption{The comparison between solutions with and without control. The graphs show the total population of each group $\int_\Omega \rho_i(t,x)dx$ for $0\leq t \leq 1$ and $i\in\{S,I,R\}$.}
        \label{fig:sir-comparison-graphs}
\end{figure}

\subsection{Experiment 2}

In this experiment, we consider nonsymmetric initial densities.
\begin{equation*}
    \begin{aligned}
        \rho_S(0,x) &= 0.45
        \Bigl(
        \exp\bigl(-15((x-0.3)^2+(y-0.3)^2)\bigr)\\
        &\quad + \exp\bigl(-25((x-0.5)^2+(y-0.75)^2)\bigr)\\
        &\quad + \exp\bigl(-30((x-0.8)^2+(y-0.35)^2)\bigr)
        \Bigr)\\
        \rho_I(0,x) &= 10 \bigl(0.04 - (x-0.2)^2-(y-0.65)^2 \bigr)_+\\
        &\quad + 12 \bigl(0.03 - (x-0.5)^2-(y-0.2)^2 \bigr)_+\\
        &\quad + 12 \bigl(0.03 - (x-0.8)^2-(y-0.55)^2 \bigr)_+\\
        \rho_R(0,x) &= 0.
    \end{aligned}
\end{equation*}
Susceptible population is the sum of three Gaussian distributions and infected population is the sum of positive part of quadratic polynomials. We conduct this experiment to show that the algorithm works well for nonsymmetric initial densities. Moreover, we choose $\beta = 0.34$ (an infection rate) and $\gamma = 0.12$ (a recovery rate) from~\cite{bertozzi2020challenges} based on the data in California, U.S. from March to May 2020. Figure~\ref{fig:exp3-1} shows the evolution of densities using these parameters. We repeat the experiment with same initial densities and $\beta$ but with different $\gamma$ (Figure~\ref{fig:exp3-2}). In this experiment, we show the solution of the problem based on a large $\gamma = 0.36$. This experiment is under the scenario when vaccine comes to the public. In both figures, evolutions of densities $\rho_i$ ($i\in \{S,I,R\}$) are shown at $t=0, 0.21, 0.47, 0.74, 1$. The total population of each density is indicated as \textit{sum} in the subtitle of each plot, and it is calculated as $\int_\Omega \rho_i(t,x) dx$ for $0\leq t \leq T$ and $i \in \{S,I,R\}$.

When $\gamma=0.12$ (a low recovery rate), the solution separates susceptible population away from infected population. By separating susceptible from infected, the solution prevents susceptible population becoming infected, thus reduces the terminal cost at $t=1$. When $\gamma=0.36$ (a high recovery rate), recovering the infected is considered to be a better choice than separating susceptible population from infected population. In Figure ~\ref{fig:exp3-2}, the susceptible population barely moves over time. We also observe that less number of infected and more number of recovered. The total population of infected at the terminal time in Figure~\ref{fig:exp3-2} is $0.045$ which is smaller than the total population of infected in Figure~\ref{fig:exp3-1}. This experiment tells us that, with a high recovery rate, the optimal way of minimizing the number of infected is by focusing on recovering them rather than moving susceptible population.

\begin{figure}[ht]
\includegraphics[width=1\linewidth]{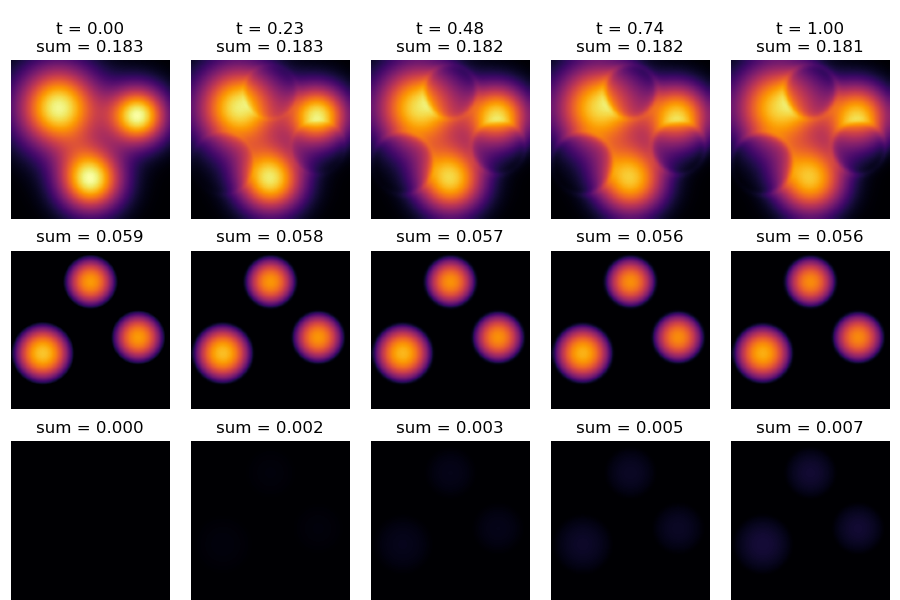}
\caption{Experiment 2. The evolution of populations from $t=0$ to $t=1$ with $\beta = 0.34$ and $\gamma = 0.12$. The first row represents susceptible, the second row represents infected, and the last row represents recovered.}
\label{fig:exp3-1}
\end{figure}

\begin{figure}[ht]
\includegraphics[width=1\linewidth]{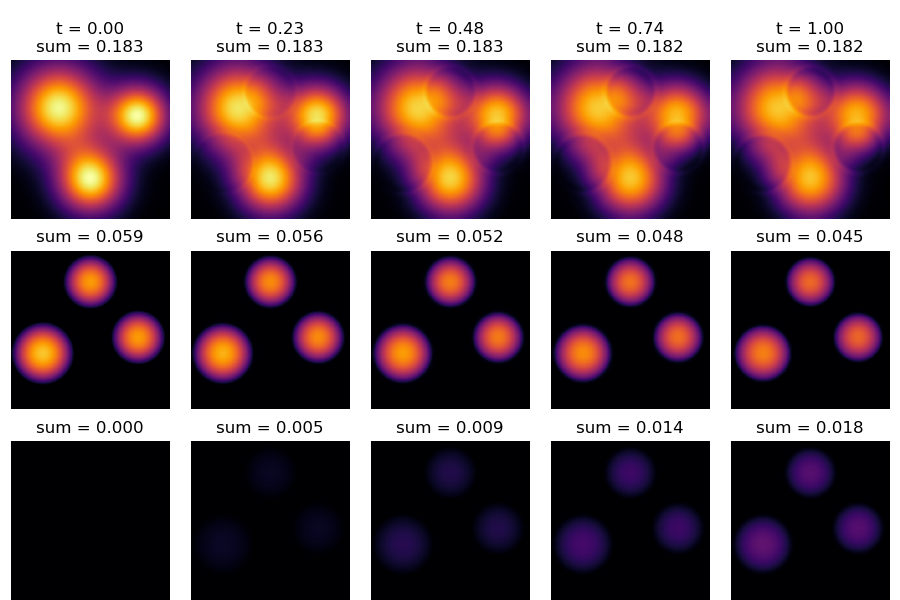}
\caption{Experiment 2. The evolution of populations from $t=0$ to $t=1$ with $\beta = 0.34$ and $\gamma = 0.36$. The first row represents susceptible, the second row represents infected, and the last row represents recovered.}
\label{fig:exp3-2}
\end{figure}

\subsection{Experiment 3}

In this experiment, we consider the initial densities
\begin{equation*}
    \begin{aligned}
        \rho_S(0,x) = \begin{cases}
            0.4 &\text{ if } x\in B_{0.3}(0.5,0.5)\\
            0 &\text{ else}
        \end{cases},\quad
        \rho_I(0,x) = \begin{cases}
            0.4  &\text{ if } x \in B_{0.2}(0.5,0.5)\\
            0 &\text{else}
        \end{cases} ,\quad \rho_R(0,\cdot) = 0
    \end{aligned}
\end{equation*}
where $B_R(x_1,x_2)$ is a ball of radius $R$ centered at $(x_1,x_2)$ with value. Furthermore, we consider the following energy functional:
\[
    E(\rho_I(T,\cdot)) = \int_\Omega \frac{1}{2} \rho_I^2(T,x) + \rho_I(T,x) V(x) dx
\]
where, for $x=(x_1,x_2)$,
\begin{equation*}
    V(x) =
    \begin{cases}
        1 & \text{if $|x_1-0.5|<0.1$ and $|x_2-0.5|<0.1$}\\
        0 & \text{otherwise.}
    \end{cases}
\end{equation*}
Here $V(x)$ is a step function that equals $1$ on a square with a side length $0.2$ at the center of the domain and $0$ elsewhere. This energy penalizes if there is positive infected density on the square. Thus, the solution has to move the infected density away from the square region while minimizing the total infected population. In this set of experiments, we show how the solution changes based on an infection rate $\beta$. We consider the case with a high infection rate $\beta=0.96, \gamma=0.12$ (Figure~\ref{fig:exp4-1}) and with $\beta=0.34, \gamma=0.12$ (Figure~\ref{fig:exp4-2}) same as Experiment~2.

When $\beta=0.96$ (a high infection rate), the solution minimizes the total infected population by separating the susceptible from infected. Due to the usage of this energy functional, the infected population has to move away from the square region at the center. Since there is going to be no infected population in this square region at the terminal time, the optimal place for susceptible population is inside this square region. As a result, we can see the concentrated susceptible population inside this square at the terminal time. When $\beta=0.32$ (a low infection rate), the susceptible population does not move as much as in the case when $\beta$ is large. There are more overlaps between susceptible and infected groups at the terminal time when $\beta$ is small. However, when $\beta$ is large, there is a complete separation between these groups. Thus, based on $\beta$ and $\gamma$ values, the solution of our model can find the most cost-effective way of moving susceptible and infected populations while minimizing the total infected population.

\begin{figure}[ht]
\includegraphics[width=1\linewidth]{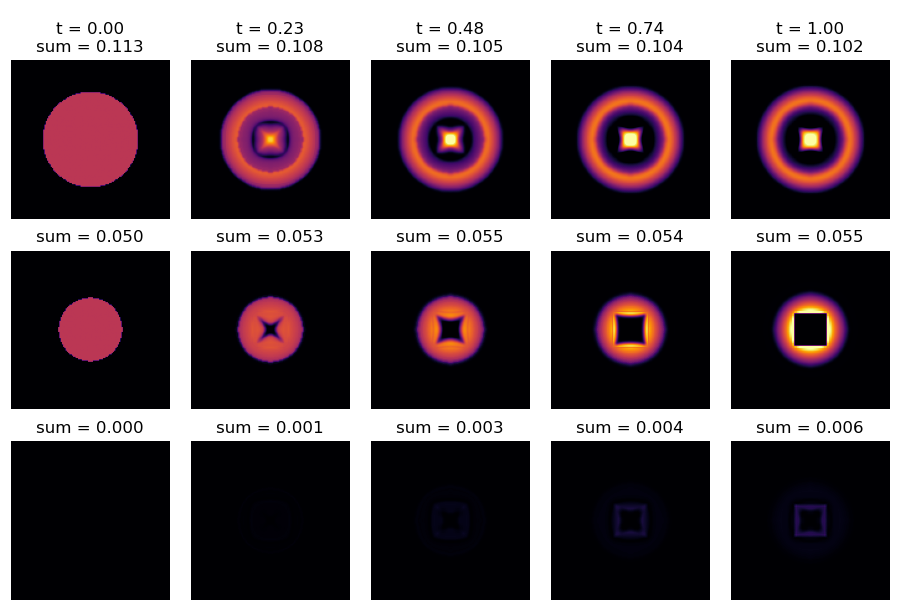}
\caption{Experiment 3. The evolution of populations from $t=0$ to $t=1$ with $\beta = 0.96$ and $\gamma = 0.12$. The first row represents susceptible, the second row represents infected, and the last row represents recovered.}
\label{fig:exp4-1}
\end{figure}

\begin{figure}[ht]
\includegraphics[width=1\linewidth]{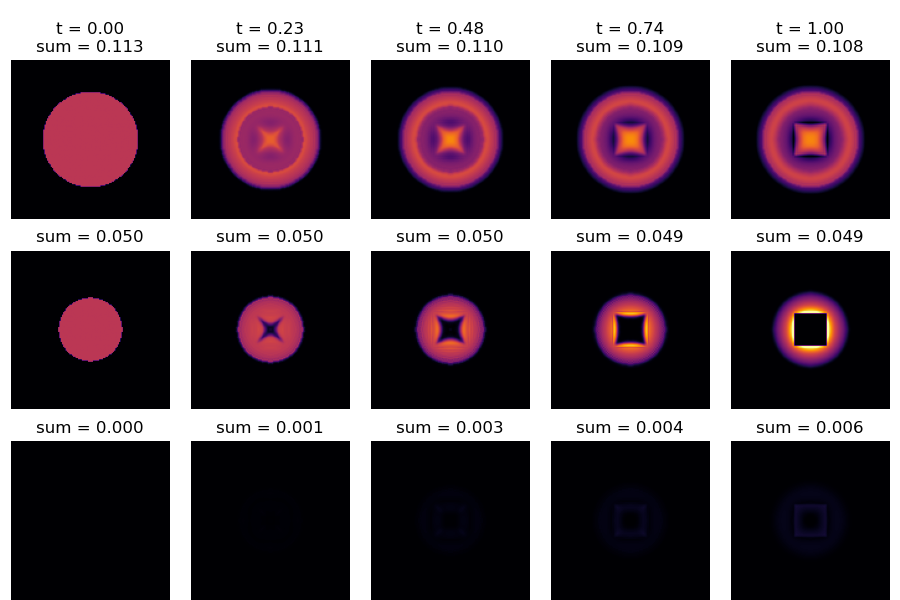}
\caption{Experiment 3. The evolution of populations from $t=0$ to $t=1$ with $\beta = 0.34$ and $\gamma = 0.12$. The first row represents susceptible, the second row represents infected, and the last row represents recovered.}
\label{fig:exp4-2}
\end{figure}

\section{Discussion}
In this paper, we introduce a mean-field control model for controlling the virus spreading of a population in a spatial domain, which extends and controls the current SIR model with spatial effect. Here the state variable represents the population status, such as S, I, R, etc with a spatial domain, while the control variable is the velocity of motion of the population. The terminal cost forms the goal of government, which balances the total infection number and maintain suitable physical movement of essential tasks and goods. Numerical algorithms are derived to solve the proposed model. Several experiments demonstrate that our model can effectively demonstrate how to separate the infected and susceptible population in a spatial domain. 

Our model opens the door to many questions in modeling, inverse problems and computations, especially during this COVID-19 pandemic. On the modeling side, first, we are interested in generalize the geometry of the spatial domain. Second, our current model only focuses on the control of population movement. The control of the diffusion operator among populations is also of great interests in future work. Third, the government can also put restrictions on the interaction for different class of populations, depending on their infection status. Fourth, in real life, the spatial domain is often inhomogeneous, containing airports, schools, subways etc. We also need to formulate our mean-field control model on a discrete spatial graph (network). In addition, our model focus on the forward problem of modeling the dynamics of the virus. In practice, real time data is generated as the virus spreading across different regions. To effectively model this dynamic, a suitable inverse mean-field control problem needs to be constructed. On the computational side, our model involves a non-convex optimization problem, which comes from the multiplicative term of the SIR model itself. In future work, we expect to design a fast and reliable algorithm for these advanced models. We also expect to develop and apply AI numerical algorithms to compute models in high dimensions.

\end{document}